\newcommand{\R}{\ensuremath{\mathbb{R}} }
\newcommand{\D}{\ensuremath{\mathscr{D}} }
\newcommand{\F}{\ensuremath{\mathscr{F}} }
\newcommand{\flr}[2]{\lfloor \frac{#1}{#2}\rfloor} 
\newtheorem*{Main Result}{Main Result}
\newtheorem{theorem}{Theorem}
\newtheorem{lemma}[theorem]{Lemma}
\begin{document}




\title{A Tree Sperner Lemma}

%
%

\author{Andrew Niedermaier}
\address{Jane Street Capital \\
1 New York Plaza, Floor 33\\
New York, NY 10004
}
\email{agnieder@gmail.com}

\author{Douglas Rizzolo}
\address{
Department of Mathematics\\
University of Washington\\
Seattle, WA 98195}
\email{drizzolo@uw.edu}
 
\author[Francis Edward Su]{Francis Edward Su$^*$}
\address{Department of Mathematics\\ Harvey Mudd College\\ Claremont,
  CA  91711}
\email{su@math.hmc.edu}

\thanks{$^*$Corresponding author.}
\thanks{
Rizzolo acknowledges partial support by NSF grant DMS-1204840 and NSF Graduate Research Fellowship grant DGE-1106400.
Su acknowleges
partial support by NSF Grants DMS-0301129, DMS-0701308, DMS-1002938.
}


\subjclass[2000]{Primary 05C10; Secondary 55M20, 05C05}

\maketitle

\begin{abstract}
In this paper we prove a combinatorial theorem for finite labellings of trees,
and show that it is equivalent to a theorem for finite covers of metric trees 
and a fixed point theorem on metric trees.  We trace how these connections mimic the equivalence of the Brouwer fixed point theorem with the classical KKM lemma and Sperner's lemma.  We also draw connections to a KKM-type theorem about infinite covers of metric trees and fixed point theorems for non-compact metric trees.  Finally, we develop a new KKM-type theorem for cycles, and discuss interesting social consequences, including an application in voting theory.
\end{abstract}

\section{Introduction}
The Brouwer fixed point theorem is a celebrated topological result 
that says every continuous map of an $n$-ball to itself has a fixed point.
It is known \cite{KKMa29, Yose74} to be equivalent to a set-covering result known as the KKM lemma, and a combinatorial result known as Sperner's lemma.  Aside from their intrinsic interest, these equivalent formulations have led to simpler methods for proving the Brouwer result as well as practical algorithms for finding fixed points of highly non-linear functions (see e.g., \cite{todd}).  Other topological theorems such as the Borsuk-Ulam theorem also admit similar set-covering and combinatorial formulations \cite{NySu13}.

Like the $n$-ball, a finite tree as a topological space also has the \emph{fixed point property}: every continuous map of a tree to itself has a fixed point.  A primary goal of this paper is to explore a combinatorial analogue of the fixed point property for trees and draw a connection to a set-covering analogue, akin to the Sperner and KKM lemma analogues of the Brouwer theorem.  

Our new combinatorial analogue is Theorem \ref{thm:tree-sperner} which we call the Tree Sperner Lemma because of its similarities with Sperner's Lemma.  We show that this is equivalent to a new Tree Fixed Vertex-Edge Theorem (Theorem \ref{thm:tree-discrete}) involving functions defined only on the vertex set of a combinatorial tree.  These are proved in 
Section \ref{sc Main} and are easy to establish, but lead to simpler proofs of some known results about metric trees: a Tree KKM Theorem (Theorem \ref{thm:tree-kkm}) in Section \ref{sc KKM} and a Tree Fixed Point Theorem (Theorem \ref{thm:tree-fixed}) in Section \ref{sc finite trees}.  Theorem \ref{thm:equiv} shows that these four results are equivalent.

Because finite trees are compact and acyclic, the fixed point property for trees follows from the Lefschetz fixed point theorem just like the Brouwer theorem does.  However, this approach gives little insight into the location of a fixed point, and the Lefschetz theorem is not easy to prove.  By contrast, our Tree Sperner Lemma (i) gives an accessible proof of the fixed point result for trees, (ii) suggests where the corresponding fixed point is and a constructive procedure for finding it, and (iii) is of intrinsic interest due to its similarity with Sperner's lemma.

Moreover, the Tree Sperner Lemma applies more generally to infinite trees, as long as the label set is finite.
So we also show in Section \ref{sc infinite trees} how it can be used to prove a known KKM-type result about infinite covers of trees (Theorem \ref{thm:tree-kkm-infinite}), as well as a fixed point result for compact maps of infinite trees (Theorem \ref{thm:tree-fixed-infinite}).



Finally, we use the Tree KKM Theorem in Section \ref{sc cycles} to prove a new KKM-type result for covers of cycles (Theorem \ref{thm:cycle-kkm}).  Along the way we also consider interesting social interpretations of our results, including applications to voting theory.


\section{A Tree Sperner Lemma}
\label{sc Main}

The usual Sperner's lemma starts with a triangulated $n$-simplex $\Delta$ whose vertices have a \emph{Sperner labeling}: 
\begin{itemize}
\item  each main vertex of $\Delta$ has a distinct label (chosen from $n+1$ labels), and
\item  each vertex $v$ of the triangulation is assigned a label of one of the main vertices spanning the minimal face of $\Delta$ that $v$ is on.
\end{itemize}
For instance, if $v$ is on the edge of $\Delta$ spanned by $a$ and $b$, then $v$ must be labelled either $a$ or $b$.

For such a labelling, Sperner's lemma asserts that there must be a \emph{fully-labelled} simplex, i.e., one with all $n+1$ labels.
In Theorem \ref{thm:tree-sperner}, we develop an analogous combinatorial theorem for \emph{proper} labellings of $n$-vertex trees by $n$ labels that will assert the existence of an edge with all $n$ labels.  

For this result, we view trees
as combinatorial (i.e., connected acyclic graphs specified by vertices and edges), although 
in subsequent sections we shall consider the implications of our result for metric trees 
(tree-like metric spaces).

Let $T=(V,E)$ be a tree with vertex set $V$ and edge set $E$.  
To avoid trivialities, we assume $V$ has at least two vertices.
If $V$ is finite, we say $T$ is a {\em finite} tree; otherwise $T$ is {\em infinite}.  Note that even for an infinite tree, 
between any two vertices $u$ and $w$, there is a finite chain of edges that connect $u$ and $w$ and this path of edges is unique.

If $v$ is a vertex, then let $T \setminus v$ denote the graph that
results from removing $v$ from $V$ and all the edges incident to $v$
from $E$.  This new graph may have several connected components.   Similarly, for an element $e=\{v_1,v_2\}\in E$, we let $T\setminus e$ denote the graph that results from removing $e$ from $E$.   We note that, since $T$ is a tree, $T\setminus e$ has exactly two components.

\begin{figure}[htb]
  \begin{center}
    \scalebox{.7}{\includegraphics{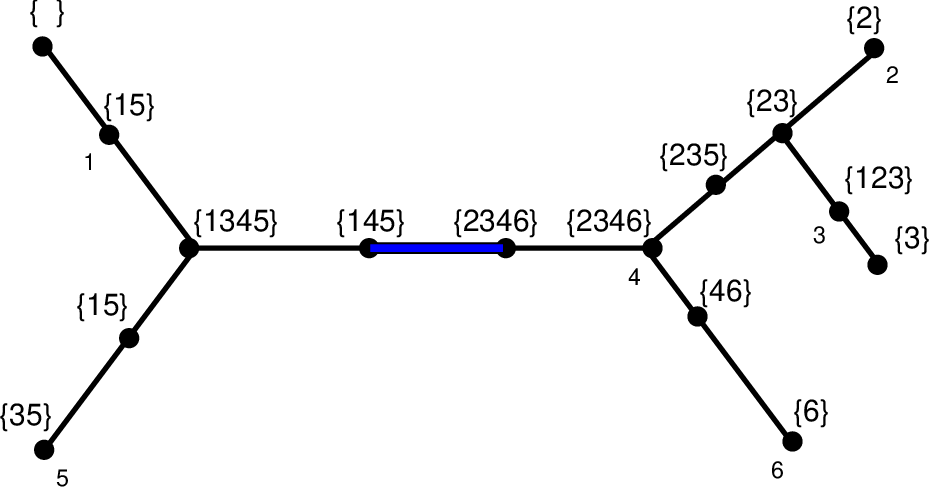}}
  \end{center}
  \caption{A proper labelling.  The non-bracketed numbers mark the vertices that are labels and the bracketed numbers denote the labels of each vertex.  The thickened edge is a fully-labelled edge.}
  \label{fig-fully-labelled}
\end{figure}

Let $A$ be a subset of $V$ which we call the {\em labels}; every vertex of $V$ will be assigned
a collection of labels by a labelling function $\ell$.  Let $2^A$ denote the power set of $A$, i.e., the set of all subsets of $A$.

We call a labelling $\ell:V \rightarrow 2^A$ {\em proper} if:
\begin{itemize}
\item  for each $a \in A$, $\ell(a)$ contains $a$, and 
\item  if $v \in V$ lies on the unique path between $a, b \in A$, then $\ell(v)$ contains either $a$ or $b$.
\end{itemize}
Note how these conditions are analogous to those of the usual Sperner's lemma.  It is easy to verify that they are equivalent to this one condition:
\begin{itemize}
\item  for each $v \in V$, if the set $A \setminus \ell(v)$ is nonempty, then it consists of vertices that all lie in one component of $T \setminus v$.
\end{itemize}
We may think of this condition as saying that $\ell(v)$ is missing labels from at most one component of $T\setminus v$.
Similarly, if $e$ is an edge incident to $v$, then $\ell(v)$ is missing labels from at most one component of $T\setminus e$.
Figure \ref{fig-fully-labelled} shows an example of a tree with a proper labelling.

Let $e$ be an edge with endpoints $x,y$.
We call that edge \emph{fully-labelled} if $\ell(x)\cup \ell(y) = A$, i.e., if the edge 
contains all labels.  The proper labelling in Figure \ref{fig-fully-labelled} has a
fully-labelled edge. This exemplifies our theorem, which may be viewed as an analogue of Sperner's lemma for trees:

\begin{theorem}[Tree Sperner Lemma]
\label{thm:tree-sperner}
Let $T=(V,E)$ be a tree, let $A$ be a finite subset of $V$, and let $\ell:V \rightarrow 2^A$ 
be a proper labelling.   Then $T$ contains a fully-labelled edge.
\end{theorem}

The finiteness of $A$ is essential, as one may see by considering the {\em integer tree}: whose vertices are the integers and whose edges connect successive integers (see Figure \ref{fig-integer-tree}).  Now let $A=V$ and label each vertex 
$n$ by all the integers less than or equal to $n$.  This labelling is proper, but it does not have a fully-labelled edge.

\begin{proof}
It suffices to consider the case where $V$ is also finite, for if not, we may restrict our attention to the finite subtree of $T$ spanned by the vertices of $A$,
noting that any fully-labelled edge in this subtree is fully-labelled in the whole tree.  
  
  If there exists $v$ in $V$ with $\ell(v)=A$, then desired conclusion holds: any edge
  containing $v$ is fully-labelled.  Otherwise, we can construct a ``successor'' function $S: V \rightarrow V$.
  For each $v$ in $V$, the labels $A \setminus \ell(v)$ come from 
  exactly one component of $T \setminus v$.  In that component, let $S(v)$ be the unique 
  vertex that shares an edge with $v$ in $T$.

  Consider the sequence $\{v_n\}$ defined by choosing some $v_1 \in A$ 
  and letting $v_k = S(v_{k-1})$ for $k>1$.   Since $V$ is finite, this sequence must be 
  eventually periodic.  Since $T$ is a tree, this period cannot be of length greater
  than two.  Thus $S(x)=y$ and $S(y)=x$ for some $x,y$ that are
  endpoints of an edge $e$.  
  
  We claim $e$ is fully-labelled.  
  Since $S(y)=x$, the labels $A \setminus \ell(y)$ are in the component of $T\setminus y$ containing $x$.  
  Since $S(x)=y$, the labels $A \setminus \ell(x)$ are in the component of $T\setminus x$ containing $y$.  
  Then $A \setminus \ell(y)$ and $A \setminus \ell(x)$ are disjoint because they are in different components of 
  $T\setminus e$.  Hence $\ell(x)\cup \ell(y) = A$ so that $e$ is a fully-labelled edge. 
\end{proof} 

Note that the above successor function yields a systematic method for
locating a fully-labelled edge, by iterating $S$ until one reaches a vertex for
which $S$ is not defined (and therefore has all labels), or until the
sequence repeats.  This avoids exhaustively checking vertices, which can be problematic if $V$ is infinite.

Theorem \ref{thm:tree-sperner} yields an interesting corollary for functions 
whose domain and range are vertices of $T$.
\begin{theorem}[Tree Fixed Vertex-Edge Theorem]
\label{thm:tree-discrete}
Suppose $T=(V,E)$ is a tree and $f:V\to V$ is a function on vertices with finite range.  
Then either $f$ fixes some vertex, or 
there is an edge $e\in E$ with endpoints $x$ and $y$ such that $e$ is on the path from $f(x)$ to $f(y)$.
\end{theorem}

Theorem \ref{thm:tree-discrete} may be viewed as a kind of ``discrete'' fixed point theorem for trees, 
because it says that either there is a fixed vertex, or some edge must be covered by the path between the images of its endpoints.  Compare it to the continuous version we prove later in Theorem \ref{thm:tree-fixed}.  

\begin{proof}
Let $A$ be the range of $f$.  Suppose $f$ does not fix any vertex.
Consider a labelling $\ell: V\to 2^A$ such that for each vertex $v$, 
$\ell(v)$ is the set of all vertices of $A$ that are not in the component of
$T\setminus v$ containing $f(v)$ (including $v$ if $v \in A$).
The labelling is by definition proper.
Theorem \ref{thm:tree-sperner} implies that there is a fully-labelled edge
$e$ with endpoints $x$ and $y$.

Since by definition $f(y) \notin \ell(y)$, we must have $f(y) \in \ell(x)$.  
Thus $f(y)$ is not in the component of $T \setminus x$ containing $f(x)$, 
so $x$ must be on the path between $f(x)$ and $f(y)$.
Similarly, $f(x) \notin \ell(x)$ implies $f(x) \in \ell(y)$.
Thus $f(x)$ is not in the component of $T \setminus y$ containing $f(y)$, 
so $y$ must be on the path between $f(x)$ and $f(y)$.
Thus $e$ must be on the path between $f(x)$ and $f(y)$, as desired.
\end{proof}

Moreover, 
\begin{theorem}
\label{thm:first-two}
Theorem \ref{thm:tree-discrete} is equivalent to Theorem \ref{thm:tree-sperner}.
\end{theorem}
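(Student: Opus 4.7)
The statement asserts equivalence, and one direction has already been handled in the excerpt: the proof of Theorem \ref{cor-no-fixed-points} deduces it directly from Theorem \ref{thm:sperner-tree}. So the real task is the converse --- assuming Theorem \ref{cor-no-fixed-points}, derive Theorem \ref{thm:sperner-tree}.

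My plan is to manufacture a fixed-point-free self-map of $V$ from the given proper labelling and feed it into Theorem \ref{cor-no-fixed-points}. As in the original proof of Theorem \ref{thm:sperner-tree}, first pass to the subtree of $T$ spanned by $A$, so that $V$ may be assumed finite. If some vertex already has $\ell(v)=A$, any edge at $v$ is fully-labelled and we are done. Otherwise, for each $v$ the set $A \setminus \ell(v)$ is a nonempty subset of $A$ lying in a single component of $T \setminus v$, and I define $f\colon V \to V$ by picking $f(v)$ to be any element of $A \setminus \ell(v)$. The condition $a \in \ell(a)$ for $a \in A$ rules out $f(v)=v$ when $v \in A$, and when $v \notin A$ the value $f(v)$ lies in $A$ and so automatically differs from $v$; hence $f$ fixes no vertex.

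Applying Theorem \ref{cor-no-fixed-points} then produces an edge $e=\{x,y\}$ on the path from $f(x)$ to $f(y)$, and it remains to show that $e$ is fully-labelled. Suppose some $a \in A$ is missing from both $\ell(x)$ and $\ell(y)$. By the proper-labelling property and the construction of $f$, the vertex $a$ lies in the same component of $T \setminus x$ as $f(x)$ and in the same component of $T \setminus y$ as $f(y)$. Since $e$ is on the path between $f(x)$ and $f(y)$, those two vertices lie in different components of $T \setminus e$. A short case analysis on which side of $e$ each of $f(x)$ and $f(y)$ lies then forces $a$ into both components of $T \setminus e$ simultaneously --- the desired contradiction, so $e$ is fully-labelled.

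The main obstacle I anticipate is exactly this last bookkeeping step: carefully identifying how the two components of $T \setminus e$ sit inside the branch structures at $x$ and at $y$, and verifying that both possible configurations (namely $f(x)$ and $f(y)$ lying respectively on the $x$- and $y$-sides of $e$, or vice versa) lead to the same contradiction, with no trouble in the degenerate cases $f(x)=y$ or $f(y)=x$. No new ideas are needed, only a careful tracing of paths in the tree.
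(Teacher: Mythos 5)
Your proposal is correct and follows essentially the same route as the paper: define $f(v)$ to be a label missing from $\ell(v)$, invoke Theorem \ref{cor-no-fixed-points} to get an edge $e=\{x,y\}$ on the path from $f(x)$ to $f(y)$, and use the proper-labelling condition to conclude that the labels missing from $\ell(x)$ and from $\ell(y)$ lie on opposite sides of $T\setminus e$. Your explicit reduction to the finite subtree spanned by $A$ (needed since Theorem \ref{cor-no-fixed-points} is stated for finite trees) is a detail the paper's own proof leaves implicit, but it is not a different approach.
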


\begin{proof}
Having already shown 
Theorem \ref{thm:tree-sperner} implies Theorem \ref{thm:tree-discrete}, we now show the converse.
Suppose that $\ell: V \rightarrow 2^A$ is a proper labelling of $T$, with $A$ finite.
If there is a vertex $v$ such that $\ell(v)=A$, then any edge containing $v$ is fully-labelled.
Otherwise, for each vertex $v$ define $f(v)$ to be a label from $A$ which is not in $\ell(v)$ 
(if there are several options, choose one).
Clearly $f:V \rightarrow A$ has no fixed points, 
so Theorem \ref{thm:tree-discrete} implies that there is an edge $e=\{x,y\}$ such that $e$ is on the path from $f(x)$ to $f(y)$.

We claim that $e$ is fully-labelled.  
Since $f(x)$ and $f(y)$ are in different components of $T \setminus e$ and $\ell$ is proper,
then the definition of $f$ shows that $\ell(x)$ and $\ell(y)$ are missing labels from different components of $T\setminus e$.  
So there are no
vertices of $A$ that are missing from both $\ell(x)$ and $\ell(y)$, i.e., 
$\ell(x)\cup \ell(y) = A$ and $e$ is a fully-labelled edge.
\end{proof}

\section{Metric Trees and Segmentations}
\label{sec:metric-trees}

Theorems \ref{thm:tree-sperner} and \ref{thm:tree-discrete} have several applications to metric trees, which are essentially combinatorial trees realized as metric spaces by replacing edges with line segments isometric to a compact interval of $\R$.  We make precise in this section what we mean by {\em metric tree} as well as the concept of a subdivision called a {\em segmentation}, but there are no surprises here, so this brief section may be skimmed if desired. 

A {\em metric tree} is a triple $T=(V,E,X)$, with a vertex set $V$ (that may be finite or infinite), an edge set $E$, and {\em underlying metric space} $X$.  Here, $(V,E)$ specifies a combinatorial tree and the metric space $X$ is obtained from 
$(V,E)$ by realizing every edge $e$ as an isometric copy of some closed interval $[0,L_{e}]$ and gluing the realized edges according to the instructions in $(V,E)$.  The number $L_{e}$ is called the \emph{length} of edge $e$.

%

Since $(V,E)$ has no cycles, between any two points $x,y$ in $X$ there is a 
unique non-self-intersecting path between $x$ and $y$. 
There is a natural metric on $X$: let $d(x,y)$ be the length of this path between $x$ and $y$,
i.e., the sum of the lengths of the edges (or partial edges) along this unique path.
It will be useful to note if $z$ is on the path between $x$ and $y$, then the triangle inequality becomes an equality: $d(x,y)=d(x,z)+d(z,y)$.

We remark that our definition of metric tree differs from others in the literature because it retains the combinatorial structure specified by $V$ and $E$.

Given a tree $T=(V,E,X)$ it will be convenient at times to consider a {\em segmentation} of $T$, which is another metric tree obtained from $T$ by finite subdivision of its edges.  In particular, 
$T'=(V',E',X)$ is a {\em segmentation} of $T=(V,E,X)$ if: (1) $V'=V \cup V^*$ where
$V^*$ is a collection of points $\{ v_\alpha \}$ from $X$ so that at most a finite number of the $v_\alpha$ come from a realized edge $e_X$, $e \in E$, and (2) $E'$ is the collection of edges obtained from $E$ in the natural way (by deleting edges in which elements of $V^*$ appear and including edges of the implied subdivision along that edge).  Note that the metric spaces for $T'$ and $T$ are the same, so the set of continuous functions on $T$ and $T'$ are the same.


The {\em size} of a segmentation $T'=(V',E',X)$  is defined by $size(T') = \sup_{e\in E'} L_{e}$, and bounds the size of the longest edge.  Clearly every tree has an arbitrarily small segmentation.

Note also that every point in a metric tree $T$ that is not a leaf (a vertex of degree 1) is a {\em cut point}: its removal ``cuts'' $T$ into more than one path-connected component.

In what follows, all trees $T$ are metric trees.

\begin{figure}[htb]
  \begin{center}
    \scalebox{.75}{\includegraphics{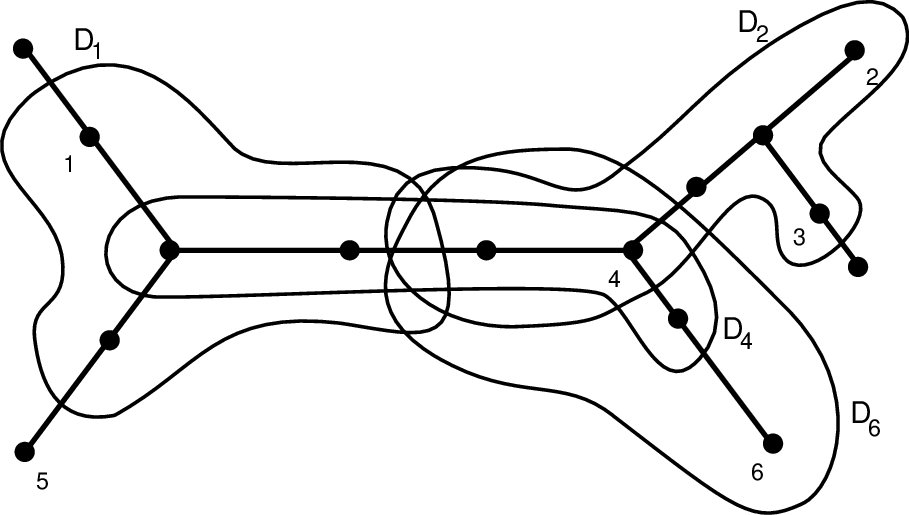}}
  \end{center}
  \caption{A KKM cover of a tree relative to the vertices $\{1,2,4,6\}$.  The sets of a KKM cover do not have to be connected sets (although they are in this diagram).}
  \label{fig-kkmcover}
\end{figure}

\section{KKM Covers of Trees}
\label{sc KKM}

The usual KKM lemma \cite{KKMa29} starts by considering an $n$-simplex $\Delta$ with a \emph{KKM cover} by $n+1$ closed sets $\{C_{i}\}$: these have the properties that
\begin{itemize}
\item each main vertex $v_{i}$ has an associated closed set $C_{i}$ that contains it
\item the face of $\Delta$ spanned by $\{v_{i_{1}}, \dots, v_{i_{k}}\}$ is covered by  $\{C_{i_{1}}, \dots, C_{i_{k}}\}$.
\end{itemize}
Under these conditions, the KKM lemma says there exists a point in the intersection of all $n+1$ sets.  

Just as the traditional Sperner's Lemma implies the classical KKM theorem for covers of simplices by closed sets,
Theorem \ref{thm:tree-sperner} implies a KKM theorem for metric trees.

Let $T=(V,E,X)$ be a tree, and $A$ a subset of points in $X$.  
Call a family of closed sets $\D = \{D_a : a \in A \}$ a 
\textit{KKM cover of $T$ relative to $A$} if:
\begin{itemize}
\item each $a \in D_a$, and 
\item for any two points $a$ and $b$ in $A$, the path between them is contained in $D_a \cup D_b$.
\end{itemize}
We call the last condition the \textit{path-covering property} of $\D$; it is analogous to the face-covering property of KKM covers of simplices.  
See Figure \ref{fig-kkmcover}.
  
If $A=V$, we may simply say $\D$ is a \textit{KKM cover} of $T$, and the the sets of $\D$ will cover the underlying metric space $X$.  
(Note that if $A \neq V$, then a KKM cover of $T$ relative to $A$ may not cover $X$, but it will cover the subtree spanned by $A$.)

Each KKM cover $\D$ relative to a (finite or infinite) subset of vertices defines a \emph{membership labeling} 
by assigning to a vertex $v$ the indices of 
all the sets of $\D$ that contain it.  (The label set may be empty if $v$ is not covered by the sets of $\D$.)  
This labeling is proper:

\begin{lemma} 
\label{lem:proper}
Let $T=(V,E,X)$ be a tree, let $A \subset V$ be a subset of vertices, and 
let $\D= \{D_a : a \in A\}$ be a KKM cover of $T$ relative to $A$.  
Then the labelling $\ell:V \rightarrow 2^A$ defined by 
$\ell(v) = \{a : v \in D_a\}$ is proper.
\end{lemma}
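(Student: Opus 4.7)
The plan is to verify directly that $\ell$ satisfies the two defining conditions of a proper labelling, using the three properties of a KKM cover.

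The first condition, that $a \in \ell(a)$ for every $a \in A$, is immediate from the definition of $\ell$ together with the second bullet in the definition of a KKM cover: since $a \in D_a$, we have $a \in \ell(a)$ by the defining formula $\ell(v) = \{a : v \in D_a\}$.

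For the second condition, I would argue by contradiction. Suppose $v \in V$ and $A \setminus \ell(v)$ contains two vertices $a$ and $b$ lying in different components of $T \setminus v$. Then the unique path in $T$ from $a$ to $b$ must pass through $v$, so $v$ lies on this path. By the path-covering property (the third bullet), this path is contained in $D_a \cup D_b$, and hence $v \in D_a \cup D_b$. But $a \notin \ell(v)$ means $v \notin D_a$, and likewise $b \notin \ell(v)$ means $v \notin D_b$, a contradiction. Hence all vertices of $A \setminus \ell(v)$ must lie in a single component of $T \setminus v$.

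There is no real obstacle here; the entire content of the lemma is that the path-covering property translates precisely into the ``one-component'' restriction in the definition of proper. The closedness of the $D_a$ plays no role in this lemma (it will matter later for the KKM conclusion), and the finiteness or infiniteness of $A$ and $V$ is also irrelevant for this step. The only subtle point worth checking is that $a$ and $b$ are genuinely distinct from $v$ so that the phrase ``different components of $T \setminus v$'' makes sense; this is automatic because $a \in D_a$ forces $a \in \ell(a)$ and so $a \ne v$ whenever $a \in A \setminus \ell(v)$, and similarly for $b$.
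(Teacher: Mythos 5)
Your proof is correct and follows essentially the same route as the paper: the first condition comes directly from $a \in D_a$, and the second is the same contradiction argument via the path-covering property when two missing labels lie in different components of $T \setminus v$. The extra observation that $a \neq v$ for $a \in A \setminus \ell(v)$ is a harmless refinement the paper leaves implicit.
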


As an example, in Figure \ref{fig-kkmcover}, all vertices in the set $D_1$ will have $1$ in their label set.  Similarly, because vertex 4 is in sets $D_2$ and $D_4$ and $D_6$, $\ell(4)$ will contain $1$, $2$, and $4$.  The leaf at top left will have an empty label set.  The reader may notice that Figure \ref{fig-fully-labelled} gives the membership labelling for the KKM cover in Figure \ref{fig-kkmcover} 
if labels $3$ and $5$ were removed from every label set.

\begin{proof}
Fix a vertex $v$ in $V$, and suppose there were two vertices $a$ and $b$ in $A$ that are not in $\ell(v)$.
Then by definition $v$ is not in $D_a$ nor in $D_b$.  Since $a \in D_{a}$ and $b \in D_{b}$, then neither $a$ nor $b$ can be the vertex $v$.  

If $a$ and $b$ were in different components of $T\setminus \{v\}$, 
then $v$ must lie on the unique path between $a$ and $b$ in $T$ and the path-covering property of $\D$ would imply $v \in D_a \cup D_b$, 
a contradiction.  Therefore any vertices not in $\ell(v)$ must lie in one component of $T\setminus \{v\}$, as desired.
\end{proof}

We now use our Tree Sperner Lemma to prove a known theorem about finite KKM covers of trees.  Although it follows from the results in \cite{Berg05} and \cite{Kham96}, our proof is more elementary.

\begin{theorem}[Tree KKM Theorem]
\label{thm:tree-kkm}
Let $T=(V,E,X)$ be a metric tree, $A$ a finite subset of points of $X$,
and suppose $\D= \{D_a : a \in A\}$ is a KKM cover of $T$ relative to $A$.  Then
\[\bigcap_{a \in A} D_a \neq \emptyset.\]
\end{theorem}

\begin{proof}
We may as well assume that $V$ is finite, 
for otherwise we may restrict our attention to the subtree $K$ 
spanned by a finite set of edges that contain $A$, which contains a finite number of vertices.
Any KKM cover of $T$ relative to $A$ will also restrict to a KKM cover of $K$ relative to $A$, 
and a nonempty intersection of the KKM cover of $K$ would imply a nonempty intersection of the KKM cover of $T$.

Suppose, by way of contradiction, that the intersection
$\cap_{a \in A} D_a$ were empty.
Then the set of complements
$\mathscr{C} = \{T\setminus D_a : a \in A \}$ 
is an open cover of $T$.
Since $V$ is finite, $X$ is compact and 
this cover has a Lebesgue number $\delta$.  
Let $T'=(V',E',X)$ be a segmentation of $T$ with $size(T') < \delta$ chosen so that $A$ is a subset of the vertices of $V'$.   

Consider the membership labelling $\ell:V'\rightarrow 2^A$ defined by 
$\ell(v') = \{a : v' \in D_a\}$.   
Lemma \ref{lem:proper} shows that $\ell$ is a proper labelling.   
By the Tree Sperner Lemma,  
there exists  a fully-labelled edge $e \in T'$ with
endpoints $x$ and $y$ such that $\ell(x)\cup \ell(y) = A$.   
Thus, for all $a \in A$, either $x \in D_a$ or $y \in D_a$ (or both).
However, since $size(T') < \delta$, the Lebesgue number property guarantees that 
$e \subseteq T\setminus D_a$ for
some $a$, implying that $e \cap  D_a = \emptyset$, a
contradicting that $e$ was fully-labelled.   Therefore, we conclude that $\cap_{a \in A} D_a
\neq \emptyset$.
\end{proof}

Note also that the sets of a KKM cover do not have to be connected (though they are in Figure \ref{fig-kkmcover}.  However, if a tree is covered by sets that are connected as well as pairwise intersecting, then it is a KKM cover!

\begin{theorem}[Tree KKM for Connected Sets]
\label{cor:tree-kkm-connected}
Let $\D= \{D_1, D_2, ..., D_k\}$ be a finite collection of closed, connected sets that cover a metric tree $T=(V,E,X)$ such that each pair $D_i \cap D_j$ is nonempty.  Then there is a point $x$ in all the sets of $\D$.
\end{theorem}

\begin{proof}
Choose points $a_i \in D_i$ for each $i$, and put them in a set $A$.  To show $\D$ is a KKM cover of $T$ relative to $A$, it remains to show the path-covering property.

If for some pair $a_i$ and $a_j$ in $A$, the path-covering property did not hold, then the path between $a_i$ and $a_j$ would contain a point $y$ that is not covered by $D_i \cup D_j$.  Then $X \setminus \{y\}$ would have two connected components that would separate $a_i$ from $a_j$.  Then $D_{i}$ and $D_{j}$ must lie in different components because each is connected.  But then they could not be pairwise intersecting, a contradiction.
\end{proof}


We indicate some implications of Theorem \ref{thm:tree-kkm} below, and note that the proof of Theorem \ref{thm:tree-sperner} would suggest associated constructive algorithms.

\subsection*{Pizza Delivery.}
Suppose you are starting a pizza delivery business and you desire a good location for your store.  Your city has
several neighborhoods connected by a tree of roads.  
For each neighborhood $i$, there is a ``deliverability'' set $D_i$: the set of all
locations on the tree with an acceptable commute to neighborhood $i$.
These sets are naturally closed and connected, as in Figure \ref{fig-kkmcover}.
Then the Tree KKM Theorem for Connected Sets (Theorem \ref{cor:tree-kkm-connected}) says that if for every pair of neighborhoods
$i$ and $j$ there is a common acceptable location to place your store,
then there will be a location with an acceptable commute to all
neighborhoods.

\subsection*{Grand Central Station.}
Suppose several cities are connected by a tree of train tracks.  Each
city has its own train authority, and suppose it is possible to get from city
$i$ to city $j$ using only those cities' trains (possibly switching
several times).  
Then Theorem \ref{thm:tree-kkm} shows that there must be a location through which trains
from all cities must pass, i.e., there is a location where one could
place a Grand Central Station.

Note that something further is true if we make some intuitive
assumptions about the structure of the tree of train tracks.
It seems reasonable to suppose that each vertex of this tree 
is a station and that trains only change directions at stations.   
With these assumptions the following result
becomes apparent: there must already be a station at which trains from all
of the cities stop, i.e., a Grand Central Station already exists.
If the point guaranteed by Theorem \ref{thm:tree-kkm} is not a station,
then trains from each city also must pass through the two nearest stations on either side of this
point (because trains only change directions at stations).

\section{A Fixed Point Theorem for Finite Trees}
\label{sc finite trees}
Just as the KKM Theorem and Sperner's Lemma imply Brouwer's Theorem on
simplices, we can use our previous theorems to prove a classical 
fixed point theorem for trees.

\begin{theorem} (The Tree Fixed Point Theorem)
\label{thm:tree-fixed}
Let $T=(V,E,X)$ be a metric tree, $V$ be finite, and $f:T\rightarrow T$ be a continuous function.  Then $f$ has a fixed point.
\end{theorem}

As already noted, this result follows from the Lefschetz fixed point theorem, 
but the proofs we give below have an advantage in being elementary.  
The first is a non-constructive proof using the Tree KKM Theorem, 
the second is a constructive approach using the Tree Sperner Lemma. 

We shall often appeal to a special family of closed sets 
associated to any continuous self-map on a tree.
Given $T=(V,E,X)$ be a metric tree, and $A \subset V$ a subset of vertices,
and $f:T\rightarrow T$ be a continuous function, let  
$\D_{f,A} = \{D_a: a \in A\}$ be the family of sets defined by
\begin{equation*}
D_a = \{x\in T : d(x,a) \leq d(f(x),a)\}.
\end{equation*}
To interpret, $D_a$ contains the set of all points that stay the same distance or move away from $a$.
We now show this family $\D_{f,A}$ is a KKM cover, so we shall refer to it as the 
{\em move-away KKM cover associated to $f$ and $A$}.

\begin{theorem}
\label{thm:fixedpt-kkmcover}
The family $\D_{f,A}$ defined above is a KKM cover of $T$ relative to $A$.
\end{theorem}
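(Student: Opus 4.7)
The plan is to verify the three defining properties of a KKM cover relative to $A$ in order. For closedness of each $D_a$, I would note that the function $g_a(x) = d(f(x),a) - d(x,a)$ is continuous on $T$ (since $f$ is continuous and the distance-to-$a$ function is 1-Lipschitz), and $D_a = g_a^{-1}([0,\infty))$. The second condition, $a \in D_a$, is immediate since $d(a,a) = 0 \leq d(f(a),a)$.

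The meat of the argument is the path-covering property: for any $a,b \in A$, the path from $a$ to $b$ lies in $D_a \cup D_b$. I expect this to be the main obstacle, though it is really just a clean application of the special metric geometry of trees. Fix $x$ on the unique path from $a$ to $b$. As noted in Section \ref{sec:metric-trees}, when $x$ lies on the path between $a$ and $b$ the triangle inequality becomes the equality
\[
d(x,a) + d(x,b) = d(a,b).
\]
On the other hand, by the ordinary triangle inequality applied to the three points $a$, $f(x)$, $b$,
\[
d(f(x),a) + d(f(x),b) \geq d(a,b).
\]

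Combining these, I would argue by contradiction: suppose $x \notin D_a$ and $x \notin D_b$. Then by definition of $D_a$ and $D_b$ we would have $d(x,a) > d(f(x),a)$ and $d(x,b) > d(f(x),b)$. Summing gives
\[
d(a,b) = d(x,a) + d(x,b) > d(f(x),a) + d(f(x),b) \geq d(a,b),
\]
a contradiction. Therefore $x \in D_a \cup D_b$, which verifies the path-covering property and completes the proof that $\mathscr{D}_{f,A}$ is a KKM cover of $T$ relative to $A$.
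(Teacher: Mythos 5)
Your proof is correct, but the path-covering step takes a genuinely different route from the paper's. The paper argues by cases on where $f(x)$ lands: since $x$ separates $a$ from $b$, the point $f(x)$ must lie outside the component of $T\setminus\{x\}$ containing $a$ or outside the one containing $b$; in the first case $x$ lies on the path from $a$ to $f(x)$, forcing $d(a,x)\leq d(a,f(x))$ and hence $x\in D_a$, and symmetrically for $b$. You instead avoid the case analysis entirely: you sum the two strict inequalities that would hold if $x$ were in neither set, use the path equality $d(x,a)+d(x,b)=d(a,b)$ (noted in Section \ref{sec:metric-trees}) together with the ordinary triangle inequality $d(f(x),a)+d(f(x),b)\geq d(a,b)$, and reach a contradiction. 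Your argument is shorter and in fact more general --- it uses only that the path is a metric geodesic, not the separation structure of trees, so it would transfer verbatim to any geodesic space. What the paper's version buys in exchange is consonance with the rest of the development: the ``which component does $f(x)$ lie in'' dichotomy is exactly the mechanism behind the successor function in Theorem \ref{thm:sperner-tree} and the labelling in Lemma \ref{le-intersect}, so the case analysis there is doing expository as well as logical work. Your closedness argument (writing $D_a$ as the preimage of $[0,\infty)$ under the continuous function $d(f(\cdot),a)-d(\cdot,a)$) is also slicker than the paper's explicit $\epsilon$--$N$ sequence argument, and equally valid.
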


\begin{proof}
It is apparent from the definition that $a \in D_a$ for each $a \in A$.

We now show that each $D_a$ is closed.  
Let $\{x^k\}$ be a sequence in $D_a$ that converges to
  $x$ in $T$.  Since $x^k \in D_{a}$, we have $d(x^{k},a) \leq d(f(x^{k}),a)$.
  Since $f$ is continuous, we see that
  $f(x^k) \rightarrow f(x)$.   
  
Let $\epsilon >0$ be given.   Choose
  $N$ such that $k\geq N$ implies that $d(x^k,x) < \epsilon$ and
  $d(f(x^k),f(x))<\epsilon$.   Then
 \[\begin{split} d(x,a) & \leq d(x,x^k)+d(x^k,a) \\
 & \leq d(x,x^k) + d(f(x^k),a) \\
 &\leq d(x,x^k)+d(f(x^k),f(x))+d(f(x),a)\\
 & < 2\epsilon+d(f(x),a).
 \end{split}\]
  Since this is true for all $\epsilon>0$ we conclude that $d(x,a)\leq
  d(f(x),a)$, so $x\in D_a$.   Therefore $D_a$ is
  closed.

Finally, we show the path-covering property of $\D_{f,A}$.
If $a$ and $b$ are in $A$, consider $x$ on the path between them.
If $f(x)=x$, then $x$ doesn't move, so $x\in D_a \cup D_b$ as desired.
Otherwise, suppose that $f(x)$ is not in
  the same component of $T\setminus \{x\}$ as $a$.
  Then the path from $a$ to $f(x)$ and must contain $x$.   
  It follows that the path from $a$ to $x$ is contained in the path from $a$ to
  $f(x)$, so we conclude that $d(a,x)\leq d(a,f(x))$, hence $x\in D_a$.   
  By similar argument,
  if $f(x)$ is not in the same component of $T\setminus \{x\}$ as
  $b$, then $x\in D_b$.  Either way, $x\in D_a\cup D_b$.
\end{proof}

We can now give a quick proof of the Tree Fixed Point Theorem:

\begin{proof}[First Proof of Theorem \ref{thm:tree-fixed}.]
  Consider the move-away KKM cover $\D_{f,V}$ associated to $f$ and $V$.
  Since $V$ is finite, the Tree KKM Theorem shows there exists a point $x$ in all sets of $\D_{f,V}$.
  By definition this means $d(x,v) \leq d(f(x),v)$ for all $v \in V$.
   
  Suppose that $x\neq f(x)$.   If $f(x)$ were a vertex $v$, then this would contradict $x \in D_{v}$.
  So $f(x)$ is not a vertex and is thus a cut point.
  Choose $w \in V$ such that $w$ and $x$ are
  in different components of $T\setminus \{f(x)\}$.   
  Then the path from $w$ to $f(x)$ is 
  contained in the path from $w$ to $x$.   Since
  $f(x)\neq x$, this implies that $d(x,w) > d(f(x),w)$, so that $x
  \notin D_w$, a contradiction.   Hence $f(x)=x$.
\end{proof}

A second proof of the Tree Fixed Point Theorem relies on the following rather standard lemma that we include for completeness.  An $\epsilon$-fixed point for $f$ is an approximate fixed point $x$ such that $d(x,f(x))<\epsilon$.

\begin{lemma}[Epsilon Fixed Point Theorem] 
\label{le-epsilonfixed}
 Suppose that $K$ is a
  compact subset of the metric space $(X,d)$ and that 
  $f:K\rightarrow K$ is continuous.   If $f$ has an $\epsilon$-fixed point for every
  $\epsilon > 0$ then $f$ has a fixed point. 
\end{lemma}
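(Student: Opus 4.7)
The plan is to use sequential compactness together with continuity in a very standard way. For each positive integer $n$, the hypothesis produces a $(1/n)$-fixed point $x_n \in K$, meaning $d(x_n, f(x_n)) < 1/n$. This gives us a sequence $\{x_n\}$ in the compact set $K$.

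Next, I would invoke compactness of $K$ (which in a metric space is equivalent to sequential compactness) to extract a convergent subsequence $x_{n_k} \to x$ for some $x \in K$. Continuity of $f$ then yields $f(x_{n_k}) \to f(x)$.

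Finally, I would bound $d(x, f(x))$ by the triangle inequality:
\[ d(x, f(x)) \leq d(x, x_{n_k}) + d(x_{n_k}, f(x_{n_k})) + d(f(x_{n_k}), f(x)). \]
The first term tends to $0$ by the choice of subsequence, the second is less than $1/n_k \to 0$, and the third tends to $0$ by continuity of $f$. Hence $d(x, f(x)) = 0$, so $f(x) = x$.

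There is no real obstacle here; the only things to be careful about are (i) that $K$ being a compact subset of a metric space is sequentially compact, so the subsequence exists in $K$ (and the limit stays in $K$), and (ii) that continuity transfers the convergence $x_{n_k} \to x$ to $f(x_{n_k}) \to f(x)$. Both are standard facts about compact metric spaces, so the proof is essentially a two- or three-line application of the triangle inequality after the subsequence is chosen.
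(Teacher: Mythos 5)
Your proposal is correct and follows essentially the same route as the paper's proof: extract a convergent subsequence of $(1/n)$-fixed points by sequential compactness, then combine the triangle inequality with continuity of $f$ to conclude $f(x)=x$. The only cosmetic difference is that the paper shows $f(a'_n)\to x$ and invokes uniqueness of limits, whereas you bound $d(x,f(x))$ directly; both are valid and equivalent.
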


\begin{proof} Let $\{a_n\}$ be a sequence of $1/n$-fixed points, 
  that is, $d(a_n,f(a_n)) < 1/n$ for all $n$.   Since $K$ is compact, $\{a_n\}$ has a convergent
  subsequence $\{a'_n\}$ converging to $x \in
  K$.   Let $\epsilon >0$.   Since $a_n'\rightarrow x$ there exists
  $N_1$ such that $n\geq N_1$ implies that $d(a'_n,x) < \epsilon /2$.
  Let $N = \max ( N_1, 2/\epsilon)$.   Then $n\geq N$ implies that
$$
d(x, f(a'_n)) \leq d(x,a'_n) + d(a'_n,f(a'_n)) < \epsilon,
$$
so that $f(a'_n) \rightarrow x$.   However, since $f$ is continuous,
we have also that $f(a'_n)\rightarrow f(x)$.   Hence $f(x)=x$, and $x$ is a desired fixed point.
\end{proof}

Now if we construct the membership labeling associated with the move-away KKM cover in the prior proof, the Tree Sperner Lemma gives a fully-labelled edge.  The next proof of the Tree Fixed Point Theorem shows that a point in this edge is an $\epsilon$-fixed point, which means that locating such an edge, as the Tree Sperner Lemma allows us to do, will allow us to find approximate fixed points.


\begin{proof}[Second Proof of Theorem \ref{thm:tree-fixed}.]
By Lemma \ref{le-epsilonfixed}, it is sufficient to show that $f$ has an $\epsilon$-fixed point for all $\epsilon >0$.   

Fix $\epsilon>0$.  Since $V$ is finite, $X$ is compact, hence $f$ is uniformly continuous.  
So there exists $\gamma>0$ such that if $d(x,y)<\gamma$ then $d(f(x),f(y)) < \epsilon/2$.   
Let $\delta = \min(\gamma, \epsilon/2)$ and let $T'=(V',E',X)$ be a segmentation of $T=(V,E,X)$ with $size(T') < \delta$.   

Let $\ell: V' \rightarrow 2^V$ be defined by $$\ell(v')=\{ v \in V : d(v',v) \leq d(f(v'),v) \}.$$
Note that this is just the membership labelling defined in Lemma \ref{lem:proper} for the move-away KKM cover $\D_{f,V}$
in the prior proof.  By the Tree Sperner Theorem, there is a fully-labelled edge $e' \in E'$ with endpoints $y$ and $z$.  We claim that $y$ is the desired $\epsilon$-fixed point.

Suppose $e'$ intersects its image $f(e')$ in some point $w$.
Then 
$$d(y,f(y)) \leq d(y,w) + d(w,f(y)) < \delta +\frac{\epsilon}{2} \leq \frac{\epsilon}{2}+\frac{\epsilon}{2} \leq \epsilon,$$
where the second inequality uses the fact that $w$ is a point in $e'$ as well as in $f(e')$.  
So in this case, $y$ is an $\epsilon$-fixed point.

Otherwise, if $e' \cap f(e')$ were empty, then since $f$ is continuous, the image $f(e')$ is connected and contained in one component of the set $X \setminus e$.  The point $f(z)$ is contained in some edge $e$ of $E$.
If $f(z) \in V$ then let $v=f(z)$, else let $v$ be the unique endpoint of $e$ that is 
in a different component of $X \setminus f(z)$ than $e'$.  
Either way, $f(z)$ is on the path from $v$ to $z$, and $f(z) \neq z$.  
So $v \notin \ell(z)$ because $f(z)$ is strictly closer to $v$ than $z$ is. 
But then $v \in \ell(y)$ because $e'$ is fully labelled, so $d(v,y) \leq d(v,f(y))$. 
Since either $f(z)=v$ or $f(z)$ separates $v$ from $y$, we see that $f(z)$ is on the path from $v$ to $y$.  Hence
$$d(v,f(z)) + d(f(z),y) = d(v,y) \leq d(v,f(y)) \leq d(v,f(z)) + d(f(z),f(y)),$$
which implies that $d(y,f(z)) \leq d(f(y),f(z))$.
But $d(f(y),f(z)) < \epsilon/2$.   So 
$$d(y,f(y)) \leq d(y,f(z)) + d(f(z),f(y)) < \frac{\epsilon}{2} + \frac{\epsilon}{2} <\epsilon.$$
Again, $y$ is an $\epsilon$-fixed point.
\end{proof}

The constructive nature of the Tree Sperner Lemma suggests a method for locating an approximate fixed point in a systematic way.  Namely, we can choose a segmentation of sufficiently small size, start at any vertex, and 
``move in the direction of the missing labels'' as suggested by the proof of Theorem \ref{thm:tree-sperner}.  This will eventually lead to a fully-labelled edge whose endpoints are approximate fixed points.

One can also use this method to converge to an actual fixed point.  In much the same way that homotopy algorithms for Sperner's Lemma can be used to ``home in'' on a fixed point by changing the size of a triangulation as one proceeds.
In the case of Sperner's lemma, one can, for example, define a triangulation on $\Delta \times \R$ that interpolates between triangulations on $\Delta$ with different mesh sizes; these are called \emph{homotopy algorithms} (e.g., see \cite{todd}, \cite{yang}).  Similarly, one can construct homotopy algorithms for trees in much the same way, by defining a triangulation on $T \times \R$.

We note that Theorem \ref{thm:tree-fixed} implies the Tree Fixed Vertex-Edge Theorem 
(Theorem \ref{thm:tree-discrete}), and thus:

\begin{theorem}
\label{thm:equiv}
The following are all equivalent:
\begin{enumerate}
\item Tree Sperner Lemma (Theorem \ref{thm:tree-sperner}),
\item Tree Fixed Vertex-Edge Theorem (Theorem \ref{thm:tree-discrete}),
\item Tree KKM Theorem  (Theorem \ref{thm:tree-kkm}), and 
\item Tree Fixed Point Theorem (Theorem \ref{thm:tree-fixed}).
\end{enumerate}
\end{theorem}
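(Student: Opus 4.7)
The plan is to close the cycle of implications $(1) \Rightarrow (3) \Rightarrow (4) \Rightarrow (2) \Rightarrow (1)$. Three of the four arrows are already in place: $(1) \Rightarrow (3)$ is the content of the proof of Theorem \ref{thm:kkm-tree}, $(3) \Rightarrow (4)$ is the first proof of Theorem \ref{thm:fixedpoint}, and $(2) \Rightarrow (1)$ is established in Theorem \ref{thm:first-two}. So the only new work needed is the implication $(4) \Rightarrow (2)$, which the sentence preceding the theorem statement already asserts should hold.

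For that implication, I would start with a finite combinatorial tree $T = (V,E)$ and a function $f:V \to V$ with no fixed vertex, then realize $T$ as a metric tree by giving every edge unit length. Next I would extend $f$ to a continuous map $\tilde{f}: X \to X$ by interpolating linearly along paths in $T$: if $p$ lies on the edge $\{x,y\}$ at fractional position $t \in [0,1]$ from $x$, let $\tilde{f}(p)$ be the point at fractional position $t$ along the unique path from $f(x)$ to $f(y)$, with the convention $\tilde{f}(p) = f(x)$ when $f(x) = f(y)$. Checking that $\tilde{f}$ agrees with $f$ on vertices, and is continuous on each edge and hence on all of $X$, is routine.

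Applying the Tree Fixed Point Theorem to $\tilde{f}$ would produce a point $x^* \in X$ with $\tilde{f}(x^*) = x^*$. Since $f$ fixes no vertex, $x^*$ cannot itself be a vertex and therefore lies in the interior of some edge $e = \{x,y\}$. By construction, $\tilde{f}$ maps $e$ into the path from $f(x)$ to $f(y)$, so $x^*$ lies on that path. Because $f(x)$ and $f(y)$ are vertices, and a non-self-intersecting path in a tree can visit an interior point of $e$ only by entering through one endpoint of $e$ and exiting through the other, the path from $f(x)$ to $f(y)$ must contain all of $e$ --- precisely the conclusion of Theorem \ref{cor-no-fixed-points}.

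The hard part is not the construction of $\tilde{f}$, which is elementary, but the final geometric step: one must argue carefully that an interior point of $e$ separates $X$ only within $e$ itself, so that any path between two vertices that passes through such an interior point is forced to traverse the entire edge $e$. This is a standard feature of unique paths in trees, but it is the crucial link that converts a continuous fixed point into the discrete ``edge-on-path'' conclusion of Theorem \ref{cor-no-fixed-points}.
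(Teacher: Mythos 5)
Your proposal is correct and follows essentially the same route as the paper: the same cycle of implications $(1)\Rightarrow(3)\Rightarrow(4)\Rightarrow(2)\Rightarrow(1)$, with the only new work being $(4)\Rightarrow(2)$ via the unit-length metric realization, the piecewise-linear extension of $f$ along paths, and the observation that a fixed point interior to an edge $e=\{x,y\}$ forces the vertex-to-vertex path from $f(x)$ to $f(y)$ to contain all of $e$. The paper phrases the final step in terms of $f(x)$ and $f(y)$ lying in different components of $X\setminus\{z\}$, but this is the same geometric fact you identify.
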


\begin{proof}
Theorem \ref{thm:first-two} showed that (2) and (1) are equivalent.  
We have also seen that (1) implies (3), and (3) implies (4), so it 
suffices to show that (4) implies (2).

Let $(V,E)$ be a tree, and let $f:V \rightarrow V$ be a function with finite range.
We construct a finite metric tree in the following way.
Let $(V',E')$ be the finite subtree of $(V,E)$ spanned by vertices in the range of $f$.  
We can realize this subtree as a metric tree $T=(V',E',X)$ by letting all edges have length $1$.
Since $(V',E')$ is finite, the underlying space $X$ is compact.

Now construct a continuous function $\widehat f:X \rightarrow X$ by extending the 
given $f$ linearly across single edges, i.e., if $x$ is a point that is some fraction of distance 
along an edge from vertex $v$ to $w$, 
then we let $\widehat f(x)$ be the point that is the same fraction of the distance along the path from $f(v)$ to $f(w)$.

By Theorem \ref{thm:tree-fixed}, there is a point $z$ such that $\widehat f (z) = z$.  If $z \in V$ then we see $f$ has a fixed vertex, as desired.  Otherwise, $z$ is on some edge $e=\{v,w\}$ and $\widehat f(v)$ and $\widehat f(w)$ must be in different components of $X \setminus \{z\}$; moreover, they must be vertices in $V$.  But $v$ and $w$ are the nearest vertices to $z$ in those two components.  Hence the path from $f(v)$ to $f(w)$ in the combinatorial tree $(V,E)$ is a path that contains $e$, as desired.
\end{proof}


\section{Infinite Settings}
\label{sc infinite trees}
We can extend both the Tree KKM theorem and the Tree Fixed Point Theorem
to the infinite setting (for covers with infinitely many sets,
and to 
compact maps on trees with infinitely many vertices).

The following standard lemma will be useful, so we include it here for completeness:
\begin{lemma}
\label{le closedintersection}
Let $X$ be a topological space.   Let $\Lambda$ be an
infinite (not necessarily countable) index set and suppose that
$\D=\{D_\alpha : \alpha\in\Lambda\}$ is a family of closed sets in $X$
with the finite intersection property.   Further suppose that
$D_\alpha$ is compact for at least one $\alpha \in \Lambda$.   Then 
$$\bigcap_{\alpha\in \Lambda} \textstyle D_\alpha
\neq \emptyset.$$
\end{lemma}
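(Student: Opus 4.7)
The plan is to reduce this to the standard finite intersection property characterization of compactness. Pick some index $\alpha_0 \in \Lambda$ for which $D_{\alpha_0}$ is compact, and then study the intersection by intersecting everything with $D_{\alpha_0}$. Concretely, I would consider the family
\[
\mathscr{E} = \{D_\alpha \cap D_{\alpha_0} : \alpha \in \Lambda\},
\]
which is a family of closed subsets of the compact space $D_{\alpha_0}$ (closed because each $D_\alpha$ is closed in $X$ and relative closedness is preserved by intersection with $D_{\alpha_0}$).

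Next I would check that $\mathscr{E}$ still has the finite intersection property: given any finite subcollection $D_{\alpha_1},\ldots,D_{\alpha_n}$ from $\mathscr{E}$, the intersection
\[
(D_{\alpha_1} \cap D_{\alpha_0}) \cap \cdots \cap (D_{\alpha_n} \cap D_{\alpha_0}) = D_{\alpha_0} \cap D_{\alpha_1} \cap \cdots \cap D_{\alpha_n}
\]
is a finite intersection of sets drawn from the original family $\D$, hence nonempty by hypothesis.

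Then I would invoke the standard fact that in a compact space every family of closed sets with the finite intersection property has nonempty total intersection. Applied to $\mathscr{E}$ inside $D_{\alpha_0}$, this gives
\[
\bigcap_{\alpha \in \Lambda} (D_\alpha \cap D_{\alpha_0}) \neq \emptyset,
\]
and since this set equals $\bigcap_{\alpha \in \Lambda} D_\alpha$, the conclusion follows. There is not really a hard step here: the only substantive content is noticing that the compactness of a single member $D_{\alpha_0}$ is exactly enough to upgrade the finite intersection property of an arbitrary (possibly uncountable) family of closed sets in $X$ to a nonempty total intersection, by restricting to $D_{\alpha_0}$ at the outset.
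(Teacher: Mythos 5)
Your proof is correct and is essentially the same argument as the paper's: both hinge on the compactness of the single member $D_{\alpha_0}$ together with the finite intersection property, the only difference being that you invoke the standard closed-set characterization of compactness as a black box while the paper unwinds it explicitly (passing to complements, extracting a finite subcover of $D_{\alpha_0}$, and contradicting the finite intersection property).
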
 

\begin{proof}
If not, then there is no point in every $D_\alpha$, hence each point is covered by some complement: $D^c_\alpha$.
Then $\D^c = \{D_\alpha^c : D_\alpha \in \D\}$ is an open cover of
 $Y = \cup \{D_\alpha : \alpha \in \Lambda\}$.
Let $D_{\beta}$ be one of the compact sets in $\D$.   It follows that
$\D^c$ is an open cover of $D_\beta$ and thus has a finite subcover, say it is the collection $\F$.
Since these sets cover $D_\beta$, the intersection of their
complements together with $D_\beta$ is empty --- that is,
$$ \left( \bigcap_{D_\alpha^c \in \F} D_{\alpha}  \right) \cap D_\beta  = \emptyset.$$
This contradicts that $\D$ has the finite intersection property and
thus proves the lemma.
\end{proof}

Now we may prove a KKM theorem for infinite trees:

\begin{theorem}[KKM Theorem for Infinite Trees]
\label{thm:tree-kkm-infinite}
Let $T=(V,E,X)$ be a tree, let $A$ be a (possibly infinite) subset of $V$.
Suppose that $\D=\{D_a : a \in A\}$ is a KKM cover of $T$ relative to $A$ 
such that at least one set in $\D$ is compact.
Then 
$$\bigcap_{a \in A} D_a \neq \emptyset.$$
\end{theorem}

The extra condition that one set be compact is essential.  
Recall again the integer tree: a line whose nodes are the integers, and whose edges are the intervals between successive integers.  See Figure \ref{fig-integer-tree}.  
One may construct a KKM cover by letting $D_i = [i,\infty)$, but the intersection of all such sets is empty.

\begin{figure}[htb]
  \begin{center}
    \scalebox{.6}{\includegraphics{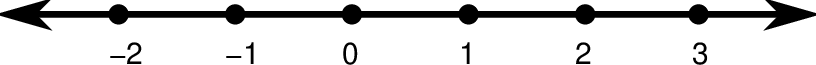}}
  \end{center}
  \caption{The integer tree, with infinite number of vertices (at integers) and edges.}
  \label{fig-integer-tree}
\end{figure}

\begin{proof} 
  By Lemma \ref{le closedintersection} it suffices to show
  that $\D$ has the finite intersection property.
  Let $J$ be a finite subset of $A$.  Then the family of sets $\{D_j : j\in J\}$
  forms a KKM cover of $T$ relative to $J$, a finite set, so 
  Theorem \ref{thm:tree-kkm} applies. Hence $\cap_{\alpha\in J} D_\alpha \neq \emptyset$.
\end{proof}

\begin{figure}[htb]
  \begin{center}
    \scalebox{.6}{\includegraphics{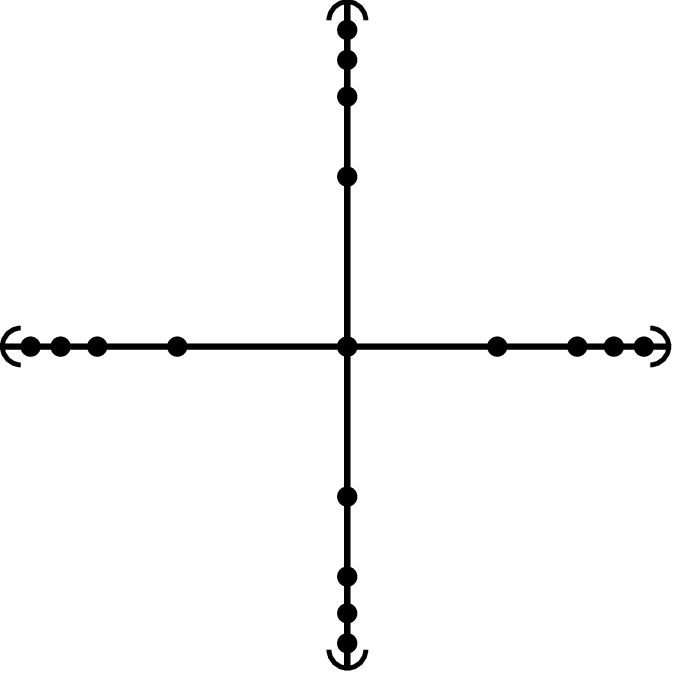}}
  \end{center}
  \caption{A bounded infinite tree.  
      Vertices (not all shown) accumulate at the open endpoints of the underlying set.}
  \label{fig-infinite-tree}
\end{figure}

Using the Tree KKM Theorem we can strengthen the Tree Fixed Point Theorem.
Recall that if $X$ and $Y$ are metric spaces,
$f:X\rightarrow Y$ is a \emph{compact} map 
if $f(B)$ is contained in a compact subset of $Y$ for every bounded set $B$.

\begin{theorem} 
\label{thm:tree-fixed-infinite}
Suppose that $T$ is a bounded tree with vertex set $V$.   If
  $f:T\rightarrow T$ is a continuous compact map, then $f$ has a fixed point. 
\end{theorem}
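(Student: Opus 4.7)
The plan is to mimic the first proof of Theorem~\ref{thm:fixedpoint}, substituting the infinite tree KKM theorem (Theorem~\ref{thm:kkm-tree-infinite}) for the finite one. First I would form the move-away family $\D_{f,V} = \{D_v : v \in V\}$ from equation (\ref{D-defn}). The proof of Theorem~\ref{thm:fixedpt-kkmcover} uses only continuity of $f$ and the tree structure, so it extends verbatim to infinite $V$, giving that $\D_{f,V}$ is a KKM cover of $T$ relative to $V$.

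Next I would exhibit a vertex $a \in V$ for which $D_a$ is compact, so that Theorem~\ref{thm:kkm-tree-infinite} applies. Since $f$ is compact and $T$ is bounded, there is a compact set $K \subseteq T$ with $f(T) \subseteq K$, and for any $a \in V$ and $x \in D_a$,
\[
d(x,a) \leq d(f(x), a) \leq \sup_{k \in K} d(k, a) < \infty,
\]
so $D_a$ is closed and bounded. I would choose $a$ to lie in (or nearest to) $K$ and then use tree geometry to confine $D_a$ in a compact region built from $K$. Concretely, letting $\widetilde K$ denote the subtree spanned by $K \cup \{a\}$ and writing $p = \pi_{\widetilde K}(x)$ for the projection of $x$ onto $\widetilde K$, the tree identity $d(x,a) = d(x,p) + d(p,a)$ combined with $f(x) \in \widetilde K$ and $x \in D_a$ forces $d(x,p) \leq d(f(x), p) \leq \operatorname{diam}(\widetilde K)$, placing $D_a$ in a bounded neighborhood of $\widetilde K$. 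Promoting this estimate to honest compactness relies on the combinatorial edge structure of $T$ together with compactness of $\overline{f(T)}$.

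Applying Theorem~\ref{thm:kkm-tree-infinite} to $\D_{f,V}$ then produces some $x^* \in \bigcap_{v \in V} D_v$, so $d(x^*, v) \leq d(f(x^*), v)$ for every vertex $v$. To finish I would repeat the cut-point argument of the first proof of Theorem~\ref{thm:fixedpoint}: if $f(x^*) \neq x^*$ and $f(x^*)$ is a leaf vertex, then $x^* \in D_{f(x^*)}$ forces $d(x^*, f(x^*)) \leq 0$, a contradiction; otherwise $f(x^*)$ is a cut point and there is a vertex $w \in V$ in a component of $T \setminus \{f(x^*)\}$ disjoint from $x^*$, whence $d(w, x^*) = d(w, f(x^*)) + d(f(x^*), x^*) > d(w, f(x^*))$ contradicts $x^* \in D_w$.

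The hard part will be the compactness of some $D_a$, since $T$ is allowed to be non-compact and need not be locally compact. Converting the geometric estimate $d(x,p) \leq \operatorname{diam}(\widetilde K)$ into genuine compactness of $D_a$ demands careful control over how $D_a$ meets branches of $T$ that extend outside $\widetilde K$, and it is precisely at this step that the compactness hypothesis on $f$ is genuinely used.
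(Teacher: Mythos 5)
Your overall architecture (move-away cover, infinite KKM theorem, cut-point argument) is the right family of ideas, and your concluding cut-point step is fine, but the pivotal claim --- that some $D_a$ in the cover $\D_{f,V}$ of the \emph{whole} tree $T$ is compact --- is not merely hard: it is false in general, so the argument cannot be completed along these lines. Consider the infinite star $T$ with center $c$ joined by edges of length $1$ to leaves $l_1,l_2,l_3,\dots$. Let $f$ be the identity on $[c,l_1]$ and $[c,l_2]$, and for $j\ge 3$ let $f$ send the point at distance $t$ from $c$ on $[c,l_j]$ to the point at distance $t$ from $c$ on $[c,l_1]$ if $j$ is odd and on $[c,l_2]$ if $j$ is even. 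Then $T$ is bounded, $f$ is continuous, and $f$ is a compact map since $\overline{f(T)}=[c,l_1]\cup[c,l_2]$. However, $d(f(x),c)=d(x,c)$ for every $x$, so $D_c=T$; for $j\ge 3$ and any $k\ge 3$ with $k\ne j$, a point $x$ on $[c,l_k]$ satisfies $d(x,l_j)=d(x,c)+1=d(f(x),c)+1=d(f(x),l_j)$, so $D_{l_j}$ contains every other branch $[c,l_k]$ entirely; the same computation shows $D_{l_1}$ contains all even-indexed branches with $k\ge 4$ and $D_{l_2}$ all odd-indexed ones with $k\ge 3$. Each of these sets contains infinitely many leaf tips, which are pairwise at distance $2$, so no $D_a$ is compact and Theorem \ref{thm:kkm-tree-infinite} cannot be invoked. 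Your estimate $d(x,p)\le d(f(x),p)$ only confines $D_a$ to a bounded neighborhood of $\widetilde K$, and bounded neighborhoods of compact sets are precisely where compactness fails in a non--locally-compact tree.

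The paper closes this gap by changing the ambient space rather than hunting for one compact set: it passes to the compact subtree $K=\overline{f(T)}$, equips it with the vertex set $V^*=(V\cap K)\cup\partial K$ (boundary points of $K$ are adjoined as new leaves), restricts $f$ to $f^*:T^*\rightarrow T^*$, and forms the move-away cover of $T^*$ relative to $V^*$. Those sets live inside $K$ and are therefore automatically compact, so the finite intersection property (supplied by the finite Tree KKM Theorem) together with Lemma \ref{le closedintersection} yields a common point $z\in K$, and the cut-point argument, run inside $T^*$ where the adjoined boundary vertices guarantee a vertex on the far side of $f(z)$, gives $f(z)=z$. If you want to salvage your draft, this restriction of the entire problem to $K$ is the missing ingredient; without it the compactness hypothesis on $f$ cannot be converted into compactness of any single $D_a$.
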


The compactness hypothesis of the map is reminiscent of the compactness hypothesis of
the Schauder fixed point theorem \cite{DuGr82}.

We give a couple of (non-)examples to illustrate.  Consider the integer tree of Figure \ref{fig-integer-tree} with the map that shifts it one unit to the right.  This map does not have a fixed point;  it is a compact map, but the tree is not bounded.  
The infinite tree of Figure \ref{fig-infinite-tree} is bounded, but it is not compact.  One may easily construct a fixed-point-free map that moves points within this tree towards the rightmost open endpoint, so that the image is not contained in any compact subset.

\begin{proof}
Since $f$ is compact and continuous, 
the image $K=\overline{f(T)}$ is compact and connected.  
Let $V'=V \cap K$ be the set of vertices of $T$ that are in $K$.
Let $\partial K$ denote the set of points that form the boundary of $K$ in $T$.
Consider $V^* = V' \cup \partial K$.  It is a subset of $K$ since $K$ is compact.

We construct a new tree $T^*$ with vertices $V^*$ and underlying metric space $K$.
To start, let $T'$ be the subtree of $T$ spanned by the vertices in $V'$, with edge set $E'$.
We shall augment $T'$ to form $T^*$ by adding points of $\partial K$ as leaves.
So consider any $x \in \partial K$.  Note that $x$ lies in some realized edge $e$ of $T$ with endpoints in $V$.

We claim that exactly one component $C$ of $e \setminus \{x\}$ will intersect $f(T)$.
This is clear if $x$ is an endpoint of $e$, else it follows from the fact that 
$f(T)$ is connected: if $s,t \in f(T)$ were in different components, 
then the path from $s$ to $t$ would lie in $f(T)$ and must contain $x$, so 
$x$ could not be a boundary point of $K$.

Let $v_{x}$ denote the unique endpoint of $e$ that is in component $C$. 
Then either $v_x$ is in $f(T)$ or not. We explore these cases and the tree $T^*$ that results:

\begin{enumerate}
\item
If $v_x \notin f(T)$, then because $f(T)$ is connected, then $f(T)$ must lie in the interior of $e$, so $f(T)$ is an interval with $x$ at one endpoint 
and $y \neq v_x$ at another endpoint.
Then $K$ has at most two boundary points 
and is an interval.
Then let $T^*$ be the tree consisting of one edge $e^*=[x,v_x]$ and two vertices $x,v_{x}$.

\item
If $v_{x} \in f(T)$, then $f(T)$ contains $C$ because $x \in K=\overline{f(T)}$.  
Then the segment $e^*_x=[x,v_x]$ lies in $K$, and $v_x \in V'=V\cap K$.
This construction may be done for every $x \in \partial K$. 
We construct $T^*$ as the tree with vertex set 
$V^*=V' \cup \{x : x \in \partial K \setminus V \}$,
edge set $E^* = E' \cup \{e^*_x : x \in \partial K \setminus V \}$, and underlying space $K$.
\end{enumerate}

Since $T^*$ has underlying space $K$, note that 
$f:T \rightarrow T$ restricts to a function $f^*:T^* \rightarrow T^*$.
Then consider $\D_{f^*,V^*}$, the move-away KKM cover of $T^*$ relative to $V^*$.

Suppose that $H$ is a finite subset of $V^*$.
From Theorem \ref{thm:fixedpt-kkmcover} we see that 
$\D_{f^*,H}$ is a KKM cover of $T$ relative to $H$ and thus,
by the Tree KKM Theorem, the intersection of its sets is nonempty.
Thus, $\D_{f^*,V^*}$ has the finite intersection property.

Because $K$ is compact, all the sets of $\D_{f^*,V^*}$ are compact,
so Theorem \ref{thm:tree-kkm-infinite} shows that the 
intersection of sets in $\D_{f,V}$ is also non-empty, say it contains a point $z$.
Then for all $v \in V^*$ we have $d(z,v) \leq d(f(z),v)$.
We claim that $f(z)=z$.


  Suppose that $z\neq f(z)$.   If $f(z)$ were a vertex $v$, this would contradict that $z\in D_v$.
  So $f(z)$ is not a vertex and is thus a cut point.   
  Choose $w \in V^{*}$ such that $w$ and $z$ are
  in different components of $T\setminus \{f(z)\}$.   
  Then the path from $w$ to $f(z)$ is 
  contained in the path from $w$ to $z$.   Since
  $f(z)\neq z$, this implies that $d(z,w) > d(f(z),w)$, so that $z
  \notin D_w$, a contradiction.   Hence $f(z)=z$.

\end{proof}

\section{A KKM Theorem for Cycles}
\label{sc cycles}
Recall that a {\em cycle} is a finite graph with vertices
$v_1,\dots,v_n$ and edges $(v_i,v_{i+1})$ as well as $(v_n,v_1)$.  
We define a {\em metric cycle} to be a triple $C=(V,E,X)$ where $(V,E)$ is a cycle (as above) and
$X$ is an {\em underlying metric space} obtained from a cycle in exactly the same fashion 
as we obtained a metric tree from a tree in Section \ref{sec:metric-trees}.  The resulting space $C$ is
topologically a circle, partitioned into a finite set of segments (realized edges) joined at their endpoints $v_1,...,v_n$.
Between any two points of $C$ there are exactly two paths; 
the metric is just the minimum length of the two paths connecting the points.

Hereafter, all metric cycles will simply be referred to as cycles.

If $C_n$ is a cycle with $n$ vertices and $x$ is in $C_n$, let $e(x)$
be the set consisting of $x$ and all points $y$ that are not vertices but are on a realized edge with $x$.
Note that by removing from $C_n$ the set $e(x)$ as well as vertices and realized edges within, we obtain a metric tree
$C_n\setminus e(x)$ with $n$ or $(n-1)$
vertices, depending on whether or not $x$ is a vertex of $C_n$.
This observation will become the key
to reducing KKM covers on cycles to KKM covers on trees.  

Now, since there are two paths connecting any two distinct
vertices in a cycle, we must slightly alter our definition of KKM cover for trees, but we
want to do so in a way consistent with our definition for trees.

With this in mind, let $C_n$ be a cycle
with $n$ vertices $V=\{v_1,v_2,\dots , v_n\}$.   A {\em KKM cover} of the cycle $C_n$ is a
family of closed sets $\D = \{D_v : v \in V \}$ such that the
following conditions hold:
\begin{itemize}
\item each $v \in D_v$, and
\item for all $v, w \in V$, {\em at least one of the paths} between $v$ and $w$ is contained
  in $D_v \cup D_w$.
\end{itemize}
This new path-covering property generalizes the corresponding property for trees.
We can now state the main theorem of this section.

\begin{theorem}  
\label{thm:cycle-kkm}
Suppose that $C_n$ is a cycle with vertices $V=\{v_1,v_2,\dots, v_n\}$ and let 
  $\D = \{D_v:
  v \in V \}$ be a KKM cover of the cycle $C_n$.   Then there is a point
  $x$ in $C_n$ such that $x$ is in at least $\lfloor\frac{n}{2}\rfloor
  + 1$ sets of $\D$.
\end{theorem}

To compare this result with Theorem \ref{thm:tree-kkm}, note that KKM covers of trees have a point in {\em all} the sets of the cover, but 
KKM covers of cycles have a point in a {\em strict majority} of the sets.

\begin{proof}
For each $x$, consider the set $\ell(x) = \{v \in V : x \in D_v\}$.   

Fix $x$.  If $|\ell(x)| \geq \flr{n}{2} +1$, then we have our desired conclusion.

Else, if $|\ell(x)| \leq   \flr{n}{2} -1$, then 
  let $H= V \setminus \ell(x)$; clearly $|H| \geq \flr{n}{2}+1$.  
  Note that for any pair $v$ and $w$ in $H$, the path between them covered by $D_v\cup D_w$ does not include $x$, 
  so this path is still covered if we remove $e(x)$ from the graph.   Then
  $C_n\setminus e(x)$ is a tree and and the family $\mathscr{F} = \{D_v: v \in H\}$ is a KKM cover of this tree relative to $H$.   
  By Theorem \ref{thm:tree-kkm}, $\mathscr{F}$
  has non-empty intersection, and it has at least $\flr{n}{2}+1$ sets of $\D$, as desired.

The only remaining case is when $|\ell(x)| = \flr{n}{2}$ is constant for all $x$.
We now show why this leads to a contradiction, but we must take some care because the sets of the cover might have several connected components.

The set of boundary points of $D_v$, denoted by $\partial D_v$, is a closed set.
Moreover, 
$\partial D_v$ has no interior,
so the finite union of boundary points $B = \cup \{ \partial D_v : v \in V\}$ is closed and has no interior. 
So $B^c$ is a nonempty open set.

So choose $x \in B^c$ and since $B$ is closed, we may find $b \in B$ that is closest to $x$.
Then all points in $U = \{ y : d(x,y) < d(x,b) \}$ are, for each $v \in V$, interior points of either $D_v$ or $D_v^c$.  
So if $y \in U$, $\ell(x)=\ell(y)$.  Since $b$ is a limit point of $U$ and each $D_v$ is closed, $\ell(x) \subseteq \ell(b)$.

If for some $v \notin \ell(x)$ we have $b \in D_v$, then $\ell(x) \cup \{v\} \subseteq \ell(b)$, so the size of $\ell$ is not constant, a contradiction.
Otherwise, for every $v \notin \ell(x)$, we have that $b \in D_v^c$, an open set, so $\ell(x) = \ell(b)$.
So there is an open set $W$ around $b$ that contains no points of $D_v$ for all $v \notin \ell(x)$; therefore for $w \in W$, 
$\ell(w) \subseteq \ell(x)=\ell(b)$.
Since $b \in B$, it must be in $\partial D_z$ for some $z \in \ell(x)$.
So there is a $w \in W$ such that $w \notin D_z$, thus $\ell(w) \subseteq \ell(b) \setminus \{z\}$, so the size of $\ell$ is not constant, a contradiction.
\end{proof}

Theorem \ref{thm:cycle-kkm} has an interesting application to voting
theory.  In {\em approval voting}, each voter specifies which options she 
would consider acceptable, without ranking the options.  Following
\cite{BNST}, the set of all options available to voters 
is called a {\em (political) spectrum}; it often has a natural
topology given by notions of ``closeness'' or ``similarity'' 
of political preferences.  For instance, the political spectrum is
often modeled as $\R$, a line with conservative positions to the right and
liberal positions to the left.  However, in elections over multiple
issues, the spectrum might be best modeled as a subset of $\R^n$.
Political spectra have been modeled also by a circle; often this
arises by bending the linear political spectrum so that the extreme
left-wing and right-wing positions are considered close; 
e.g., see \cite{norway-stuff}.

For each voter, the set of options that a voter approves is called her
{\em approval set}.  We assume that approval sets are closed subsets
of the spectrum, and we call the set of all voters together with their
approval sets a {\em society}.

We call a society with a circular political spectrum
{\em super-agreeable} if for each pair of voters
$i,j$, one of the paths between $i,j$ is covered by their approval
sets.  We remark that in many cases it is natural to assume that a voter's
approval set is connected.  In this situation, a super-agreeable
society is simply one in which every pair of voters can find common
ground, i.e., an option which they will both approve.  This agrees
with the definition of super-agreeable for a linear society, as in
\cite{BNST}.  

Then Theorem \ref{thm:cycle-kkm} then has the following corollary:

\begin{theorem}
In a super-agreeable society with a circular political spectrum, there
is an option that will be approved by a strict majority of the voters.
\end{theorem}

The value of this result is that it gives a sufficient condition for
the existence of a strict majority using approval voting when the
political spectrum is circular.  We do not assume the approval sets have to be connected.
Results for connected approval sets and weaker intersection hypotheses may be found in \cite{BNST}, who consider linear political spectra,
and Hardin \cite{Hard}, who extends those results to circular political spectra.

\bibliographystyle{plain}	
\bibliography{kkmtree}
\end{document}